\numberwithin{equation}{section}
\def\X{\mathbb X}
\newcommand{\F}{{\mathscr{F}}}
\def\N{\mathbb N}
\newtheorem{theorem}{Theorem}[section]
\newtheorem{lemma}[theorem]{Lemma}
\theoremstyle{remark}
\theoremstyle{definition}
\theoremstyle{remark}
\numberwithin{equation}{section}
\begin{document}
	\title[Dirichlet polynomials with multiplicative coefficients]{Extreme values of Dirichlet polynomials with multiplicative coefficients}
 	\author{Max Wenqiang Xu} \address{Department of Mathematics, Stanford University, Stanford, CA, USA}
 	\email{maxxu@stanford.edu}
 	 	\author{Daodao Yang} \address{Institute of Analysis and Number Theory \\ Graz University of Technology \\ Kopernikusgasse 24/II, 
A-8010 Graz \\ Austria}
 	\email{yang@tugraz.at \quad yangdao2@126.com}
	\begin{abstract}
 We study extreme values of Dirichlet polynomials with multiplicative coefficients, namely 
 \[D_N(t) : =  D_{f,\, N}(t)= \frac{1}{\sqrt{N}} \sum_{n\leqslant N} f(n) n^{it}, \]
 where $f$ is a completely multiplicative function with $|f(n)|=1$ for all $n\in\mathbb{N}$. We use Soundararajan's resonance method to produce large values of $\left|D_N(t)\right|$ uniformly for all such $f$. In particular, we improve a recent result of Benatar and Nishry, where they establish weaker lower bounds and only for almost all such $f$.  
	\end{abstract}
	\maketitle
\section{Introduction}
Resonance methods are very successful in giving extreme values of arithmetic functions. 
The earliest applications of  resonance methods can at least  be traced back to Voronin's work in \cite{Vor}. A more general and powerful version of the resonance method was introduced by Soundararajan in \cite{Sound} which was successful in finding extreme values of zeta and $L$ functions. There have been many further developments and applications of the method, we refer readers to \cite{A16, Yang22, SoundICM, A2019, BS17, BS18, CM21, BT19, Yangarx22, Yangdaodao22, A19large, siep23} and references therein. 

In this paper, we study the extreme values of Dirichlet polynomials with multiplicative coefficients. 
Let $\F$ be the set of completely multiplicative functions $f$ with $|f(n)|=1$ for all $n \in \N$. Let 
\[D_N(t) : =  D_{f,\, N}(t)= \frac{1}{\sqrt{N}} \sum_{n\leqslant N} f(n) n^{it}. \]

\begin{theorem}\label{thm: main}
Let $D_{f,\,N}(t)$ be defined as above. Let $\delta, \gamma \in (0, 1)$ be fixed. Let $T=N^{C(N)}$ where $C(N)$ satisfies $2/\delta \leqslant C(N)\leqslant (\log N)^{\gamma}$ . Then for sufficiently large $N$, we have
\[   \sup_{|t|\leqslant T} |D_{f,\,N}(t)| \geqslant \exp \left( \sqrt{(1-\delta)\frac{\log T}{\log  \log T}} \right)\,,   \]
uniformly for all $f \in \F.$ 
\end{theorem}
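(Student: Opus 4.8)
The plan is to run Soundararajan's resonance method with a resonator that is itself twisted by $f$; this twist is what upgrades the conclusion from ``almost all $f$'' to ``all $f\in\F$''. Fix a set $\mathcal{P}$ of primes (to be chosen), a nonnegative multiplicative function $\rho$ supported on the squarefree integers $\le M_0$ all of whose prime factors lie in $\mathcal{P}$ (for a cutoff $M_0$ to be chosen), write $\mathcal{M}$ for this support, and set
\[
R(t) \;=\; \sum_{m \in \mathcal{M}} f(m)\,\rho(m)\, m^{it}.
\]
Since $|D_{f,\,N}(t)|^2 \le \sup_{|s|\le T}|D_{f,\,N}(s)|^2$ on $|t|\le T$, one has
\[
\sup_{|t|\le T}|D_{f,\,N}(t)|^2 \;\ge\; \frac{\displaystyle\int_{-T}^{T} |D_{f,\,N}(t)|^2\,|R(t)|^2 \,dt}{\displaystyle\int_{-T}^{T}|R(t)|^2\,dt},
\]
and the aim is to bound the numerator below and the denominator above by expanding the squares and integrating term by term via $\int_{-T}^{T}(a/b)^{it}\,dt = 2T$ if $a=b$ and $O(1/|\log(a/b)|)=O(\max(a,b))$ otherwise.

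The twist by $f$ pays off on the diagonal. In the numerator the diagonal consists of quadruples with $n_1 m_1 = n_2 m_2$; complete multiplicativity and $|f|=1$ force $f(n_1)f(m_1)=f(n_2)f(m_2)$, hence $f(n_1)\overline{f(n_2)}\,f(m_1)\overline{f(m_2)} = 1$, so the diagonal equals $\frac1N\sum_{n_1m_1=n_2m_2}\rho(m_1)\rho(m_2)$ with all dependence on $f$ gone, while in the denominator the diagonal ($m_1=m_2$) is $2T\sum_{m}\rho(m)^2$. Consequently
\[
\sup_{|t|\le T}|D_{f,\,N}(t)|^2 \;\gg\; \frac1N\cdot\frac{\displaystyle\sum_{\substack{n_1,n_2\le N,\ m_1,m_2\in\mathcal{M}\\ n_1 m_1 = n_2 m_2}}\rho(m_1)\rho(m_2)}{\displaystyle\sum_{m\in\mathcal{M}}\rho(m)^2}
\]
provided the off-diagonal error is dominated by the diagonal. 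Bounding $1/|\log(a/b)|$ trivially and summing shows this holds once the resonator length $M_0$ satisfies $\log M_0 \le (1-o(1))\log(T/N) = (1 - 1/C(N) - o(1))\log T$; this is where the hypothesis $C(N)\ge 2/\delta$ is used, since it leaves room to take $\log M_0$ as large as $(1-\delta)\log T$. The bound $C(N)\le(\log N)^{\gamma}$ is needed only to ensure $\log\log T\sim\log\log N$, which keeps the various $o(1)$'s harmless for large $N$.

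It remains to choose $\mathcal{P}$ and $\rho$ so that the displayed ratio is at least $\exp\!\big(2\sqrt{(1-\delta)\log T/\log\log T}\big)$. As in Soundararajan's work, one takes $\mathcal{P}$ a suitable window of primes below $N$ and $\rho(p)$ a carefully calibrated profile, decreasing in $p$ and truncated at $1$. Grouping the numerator by the common value $k=n_1m_1=n_2m_2$ gives $\frac1N\sum_k W(k)^2$ with $W(k)=\sum_{m\mid k,\ m\le M_0,\ k/m\le N}\rho(m)$; discarding all $k>N$ leaves $\frac1N\sum_{k\le N}\big(\sum_{m\mid k}\rho(m)\big)^2$, which by multiplicativity factors — up to the truncations $m\le M_0$ and $k\le N$ — into an Euler product over $\mathcal{P}$. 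Those truncations cost essentially nothing precisely when the mass of the relevant sums sits on integers of logarithmic size well below $\log M_0$ and $\log N=\tfrac{1}{C(N)}\log T$, which is exactly what the calibration of $\rho$ and $\mathcal{P}$ arranges; an elementary optimization of the resulting Euler product (equivalently, a smooth-number estimate of $\Psi(N,z)$ type) then produces the constant $\sqrt{1-\delta}$.

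The main obstacle is this last balancing act: the resonator must be short enough ($M_0\lesssim T/N$) to kill the off-diagonal, yet long enough, with the right coefficient profile and smoothness parameter, that the truncations $m\le M_0$ and $k\le N$ are free in the Euler product, and the parameters must be optimized against both constraints simultaneously. This is Soundararajan's resonator optimization, here complicated by the hard cutoff $n\le N$ inside $D_{f,\,N}$, which plays the role of — and is less forgiving than — the $p^{-1/2}$ weighting present in the zeta-function version of the method. By contrast, the $f$-twist and the diagonal computation that deliver uniformity over $\F$ are comparatively soft, and all the quantitative content sits in the construction and analysis of the resonator.
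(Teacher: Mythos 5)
Your proposal follows the same overall strategy as the paper: Soundararajan's resonance method, a resonator twisted by $f$ so that on the diagonal the complete multiplicativity forces $f(n_1)\overline{f(n_2)}f(m_1)\overline{f(m_2)}=1$ and all $f$-dependence disappears, leaving a pure optimization over a nonnegative multiplicative resonator coefficient. The paper uses $r_f(n)=\overline{f(n)}r(n)$ instead of your $f(m)\rho(m)$, but this is only a conjugation convention and both serve the same purpose.

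There are two places where your sketch glosses over something the paper handles more carefully. First, you integrate against the sharp cutoff $\int_{-T}^{T}$ and assert that the ``trivial bound $O(1/|\log(a/b)|)$'' controls the off-diagonal once $\log M_0 \leq (1-o(1))\log(T/N)$. This is not quite right as stated: the trivial bound grows like $\max(a,b)$, which for products $n_1m_1$ near $NM_0$ is not small compared with $T$, and the naive termwise sum does not close. One either needs a Montgomery--Vaughan-type mean value theorem (which gives an error $O\big(\sum_k k\,W(k)^2\big)$ and does close, with a bit of bookkeeping), or, as the paper does, one replaces the sharp cutoff by a smooth compactly supported $\Phi$ so that $\hat\Phi(T\log(ma/nb))\ll_\nu (T|\log(ma/nb)|)^{-\nu}\ll T^{-10}$ whenever $ma\ne nb$, making every off-diagonal term individually negligible. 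The smooth weight is what lets the paper choose $X=T^{1-2\delta/3}$ with essentially no loss, which is why the paper ends up with $\sqrt{1-2\delta/3}$ (comfortably absorbing the $o(1)$'s to reach $\sqrt{1-\delta}$) rather than having to fight for every factor.

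Second, you delegate the core estimate to ``an elementary optimization of the resulting Euler product.'' That optimization is precisely Hough's Lemmas 4.3 and 4.5, which the paper cites: they require the specific resonator $r(p)=\lambda/(\sqrt p\log p)$ on the window $\lambda^2\leq p\leq\exp((\log\lambda)^2)$ with $\lambda=\sqrt{\log X\log\log X}$, together with a Rankin-trick estimate showing the truncations at $X$ and $N$ lose only lower-order terms. Your remark that the mass of the resonator must sit well below $\log N$ is the right intuition, but the condition that makes it work is $\log N > 3\lambda\log\log\lambda$; this, not ``$\log\log T\sim\log\log N$,'' is what the hypothesis $C(N)\leq(\log N)^\gamma$ with $\gamma<1$ is actually there to guarantee. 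Your alternative reduction grouping by $k=n_1m_1$ and discarding $k>N$ is a fine way to rewrite the diagonal, and would feed into the same lemmas; the paper instead uses Hough's $(a',b',g,h)$ gcd-parametrization, which is the form the cited lemmas are stated in.
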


A Steinhaus random multiplicative function $\X(n)$ is a completely multiplicative function and $\X(p)$ are independent random variables taking value on the complex unit circle for all primes $p$. 
Another way to state Theorem~\ref{thm: main} is that $\F$ is the family of all possible Steinhaus random multiplicative functions. 
In particular, for all $\X$, and all large $N$,
\[  \sup_{|t|\leqslant N^{C(N)}} \left|\frac{1}{\sqrt{N}} \sum_{n\leqslant N} \X(n) n^{it}\right| \geqslant \exp \left( \sqrt{(1-\delta)\frac{\log T}{\log  \log T}} \right)\,\geqslant \exp \left( \sqrt{ \left( \frac{1-\delta }{1+\gamma} \right)\frac{C(N) \log N}{\log  \log N}} \right). \]
This improves the lower bound obtained in a recent interesting paper \cite[Theorem 1.1]{BN2022} where they have $(\log \log N)^{4}$ instead of our $\log \log N$ in the denominator. Our result is stronger also in the sense that the bound holds \textit{uniformly for all} $f$, rather than just \textit{almost all $f$} as in \cite[Theorem 1.1]{BN2022}. We remark that the multiplicativity of $\X(n)$ is crucial here. Without it, the extreme values are significantly smaller (see \cite[Section 3.3]{BN2022} for more discussions). We refer readers to recent results in extreme values of sums of random multiplicative functions to \cite{LTW13, Harperlargevalue, KSX,mastrostefano2020maximal}. 

\section{Proof of Theorem~\ref{thm: main}}
Our method was initialed by Soundararajan in \cite{Sound}. 
We first set up the framework by using the resonance method. 
 Let \[R(t):= \sum_{n\leqslant X}r_f(n)n^{it} \quad \text{where}\quad X= T^{1-\frac{2\delta}{3}},\]
 where $r_f(n)$ is a function defined on integers. 
As in \cite{Sound},  we define $\Phi:\, \mathbb R \to \mathbb R$ to be a smooth function, compactly supported in $[\frac{1}{2}, 1]$, 
with $0 \leqslant \Phi(y) \leqslant  1$ for all $y$, and $\Phi(y)=1$ for $5/8\leqslant y\leqslant 7/8$.
Let 
\[M_1(R, T): = \int_{-\infty}^{+\infty} |R(t)|^{2} \Phi(\frac{t}{T}) dt, \]
and 
\[ M_2 (R, T): =   \int_{-\infty}^{+\infty} |R(t)|^{2} \Phi(\frac{t}{T}) |D_N(t)|^{2} dt\,.\]
\
 Then
\begin{align}\label{Dn}
    \sup_{|t|\leqslant T} |D_N(t)| \geqslant \sqrt{\frac{M_2(R, T)}{M_1(R, T)}}.\end{align}

The quantity $M_1(R, T)$ has a nice expression.  
\begin{lemma}[\cite{Sound}]\label{prop: M1}
We have 
\begin{equation}\label{eqn: M1}
    M_1(R, T) = T{\hat \Phi}(0)  (1 + O(T^{-1}) ) \sum_{n\leqslant X} |r_f(n)|^{2}. 
\end{equation}
\end{lemma}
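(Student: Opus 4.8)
The plan is the standard ``first moment'' computation of the resonance method. Expanding the square,
\[
|R(t)|^2 = R(t)\overline{R(t)} = \sum_{m,n\leqslant X} r_f(m)\overline{r_f(n)}\left(\frac{m}{n}\right)^{it},
\]
a finite sum; since $\Phi(t/T)$ is supported in $[T/2,T]$ the integrand is continuous with compact support, so we may interchange sum and integral to get
\[
M_1(R,T) = \sum_{m,n\leqslant X} r_f(m)\overline{r_f(n)} \int_{-\infty}^{\infty}\Phi\!\left(\frac{t}{T}\right)\left(\frac{m}{n}\right)^{it}\,dt.
\]
The substitution $t=Tu$ turns the inner integral into $T\int_{-\infty}^{\infty}\Phi(u)e^{iuT\log(m/n)}\,du = T\,{\hat\Phi}\!\left(-\tfrac{T}{2\pi}\log\tfrac{m}{n}\right)$, where ${\hat\Phi}$ is the Fourier transform of $\Phi$, normalized so that ${\hat\Phi}(0)=\int\Phi(y)\,dy$; note ${\hat\Phi}(0)=\int_{1/2}^{1}\Phi(y)\,dy\geqslant 1/4>0$ since $\Phi\equiv 1$ on $[5/8,7/8]$. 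The diagonal terms $m=n$ contribute exactly $T{\hat\Phi}(0)\sum_{n\leqslant X}|r_f(n)|^2$, the claimed main term.

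It remains to bound the off-diagonal terms $m\neq n$ by $O\big({\hat\Phi}(0)\sum_{n\leqslant X}|r_f(n)|^2\big)$, i.e.\ by $O(T^{-1})$ relative to the main term. I would use two ingredients: (i) since $\Phi\in C_c^\infty$, ${\hat\Phi}$ is Schwartz, so $|{\hat\Phi}(\xi)|\ll_A (1+|\xi|)^{-A}$ for every $A$; (ii) if $m\neq n$ with $m,n\leqslant X$ then $|\log(m/n)|\geqslant \log(1+1/X)\geqslant 1/(2X)$, hence $\tfrac{T}{2\pi}|\log(m/n)|\geqslant \tfrac{1}{4\pi}T^{2\delta/3}\to\infty$. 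Applying $|r_f(m)\overline{r_f(n)}|\leqslant\tfrac12(|r_f(m)|^2+|r_f(n)|^2)$ and symmetry, the off-diagonal sum is
\[
\ll_A\ T\sum_{m\leqslant X}|r_f(m)|^2 \sum_{\substack{n\leqslant X\\ n\neq m}}\left(1+\frac{T}{2\pi}\Big|\log\frac mn\Big|\right)^{-A}.
\]
For fixed $m$, as $n$ runs over integers $\neq m$ the quantities $\tfrac{T}{2\pi}|\log(m/n)|$ are spaced by at least $\tfrac{T}{2\pi}\log(1+1/n)\geqslant\tfrac{1}{4\pi}T^{2\delta/3}$ (consecutive $n$ change $\log(m/n)$ by $\log(1+1/n)$), so the inner sum is $\ll_A (T^{2\delta/3})^{-A}$ for $A\geqslant 2$. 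Thus the off-diagonal contribution is $\ll_A T^{\,1-2A\delta/3}\sum_{n\leqslant X}|r_f(n)|^2$; choosing a fixed integer $A\geqslant 3/(2\delta)$ (legitimate since $\delta$ is fixed) makes this $O\big(T^{-1}\cdot T{\hat\Phi}(0)\sum_{n\leqslant X}|r_f(n)|^2\big)$, which gives \eqref{eqn: M1}.

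There is no serious obstacle: this lemma is exactly Soundararajan's first‑moment estimate, and the argument is routine once the square is expanded. The only points deserving a moment's care are the elementary separation bound $|\log(m/n)|\geqslant 1/(2X)$ for distinct $m,n\leqslant X$ and the matching spacing of the arguments of ${\hat\Phi}$ — this is where the choice $X=T^{1-2\delta/3}$ enters, since one needs $T/X=T^{2\delta/3}\to\infty$ so that the rapid decay of ${\hat\Phi}$ beats the number of pairs — together with recording ${\hat\Phi}(0)>0$ for later use in \eqref{Dn}.
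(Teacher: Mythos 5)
Your proof is correct and follows exactly the standard Soundararajan first-moment computation that the paper simply cites: expand $|R(t)|^2$, integrate term by term to extract ${\hat\Phi}$, isolate the diagonal $m=n$ as the main term, and kill the off-diagonal using the separation $|\log(m/n)|\geqslant 1/(2X)$ together with the rapid decay of ${\hat\Phi}$ and the relation $T/X=T^{2\delta/3}$. The paper's proof is just a pointer to \cite{Sound} plus the remark that ${\hat\Phi}(y)\ll_\nu|y|^{-\nu}$ is the key ingredient, so you have filled in precisely the details the authors left to the reference.
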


\begin{proof}The result is in \cite[Equation (2), page 471]{Sound}. The key point is that partial integration gives that for any positive integer $\nu$,
\begin{equation}\label{eqn: partial}
    {\hat \Phi}(y) \ll_{\nu} |y|^{-\nu}
\end{equation}
 which leads to the expression \eqref{eqn: M1}. 
\end{proof}
Now we focus on estimating $M_2(R, T)$. 
\begin{equation}
    M_2(R, T) = \frac{T}{N} \sum_{m, n\leqslant N} \sum_{a, b \leqslant X} f (n) \overline{ f (m)} r_f(a) \overline{ r_f(b)} \hat{\Phi} (T    \log (\frac{ma}{nb})).
\end{equation}
We split $M_2(R, T)$ into two parts by considering two cases $ma =nb$ or $ma\neq nb$.
\begin{equation}\label{eqn: M2}
    M_2(R, T) = \frac{T}{N} {\hat \Phi}(0)\sum_{\substack{m, n\leqslant N\\a, b \leqslant X\\ ma = nb}}  f (n) \overline{ f (m)} r_f(a) \overline{ r_f(b)}  +  \frac{T}{N}\sum_{\substack{m, n\leqslant N\\a, b \leqslant X\\ ma \neq nb}}f (n) \overline{ f (m)} r_f(a) \overline{ r_f(b)} \hat{\Phi} (T    \log (\frac{ma}{nb}))
\end{equation}
We first consider the off-diagonal terms, i.e., $ma\neq nb$. By our assumption, $T = N^{C(N)} \geqslant N^{\frac{2}{\delta}}$. So in this case, we have
\begin{align*}
    T\left| \log \frac{ma}{nb}  \right| \geqslant T\frac{1}{NX} = \frac{T^{\frac{2\delta}{3}}}{N} \geqslant T^{ \frac{\delta}{6}}.
\end{align*}
Apply \eqref{eqn: partial} to get that \begin{align*}
    \left|\hat{\Phi} (T    \log (\frac{ma}{nb}))\right| \ll_{\delta} T^{-10}\,. 
\end{align*}
This leads to
\begin{align*}
    &\Big|   \frac{T}{N}\sum_{\substack{m, n\leqslant N\\a, b \leqslant X\\ ma \neq nb}}f (n) \overline{ f (m)} r_f(a) \overline{ r_f(b)} \hat{\Phi} (T    \log (\frac{ma}{nb}))\Big| \\\ll_{\delta}\,& \frac{T}{N} \sum_{m, n\leqslant N}1 \Big( \sum_{a \leqslant X}\ |r_f(a)|\Big)^2 T^{-10}\\\ll_{\delta}\,& T^{-9}N  X \sum_{n \leqslant X}\ |r_f(n)|^2\\\ll_{\delta}\,& T^{-7}\sum_{n\leqslant X} |r_f(n)|^2\,,
\end{align*}
where in the second step we use Cauchy–Schwarz inequality.
This shows that the off-diagonal terms are negligible.

So in the next steps, we just need to consider the diagonal terms in \eqref{eqn: M2}, i.e
the sum involving terms $ma = nb$. We use the following trick to get rid of dependence on $f$. Set  \[r_f(n) =  \overline{f(n)} r (n),\] where $r$ is a non-negative multiplicative function to be chosen later, independent of $f$.
By noticing that $|f(n)|=1$, the sum involving diagonal terms is the same as 
\begin{equation}\label{eqn: M}
    \frac{T}{N} {\hat \Phi}(0)\sum_{\substack{m, n\leqslant N\\a, b \leqslant X\\ ma = nb}}   r(a)  r(b). 
\end{equation}
By the estimates on off-diagonal terms and  \eqref{eqn: M1}, we have 
\begin{equation}\label{eqn: M_2/M_1}
\frac{M_2(R, T)}{M_1(R, T)}=  \frac{1}{N} \sum_{\substack{m, n\leqslant N\\a, b \leqslant X\\ ma = nb}}   r(a) r(b)   \Big/\sum_{n\leqslant X} r(n)^2 + O_{\delta}\left(\frac{1}{T}\right)\,.
\end{equation}
Now our theorem would immediately follow from Hough's work \cite{Hough} on large character sums.
In particular, his work would imply
\begin{equation}\label{eqn: ratio}
   \frac{1}{N} \sum_{\substack{m, n\leqslant N\\a, b \leqslant X\\ ma = nb}}   r(a)  r(b)   \Big/\sum_{n\leqslant X} r(n)^2 \geqslant  \exp \left( (2+o(1)) \sqrt{ \frac{\log X}{\log\log X}} \right).
\end{equation}
For completeness, we present Hough's work. 
Let $g = (a, b)$, $h = (m ,n)$, $a = a'g, b = b'g$ then $(a', b' ) = 1$ and $m=hb', n=ha'$. With this parametrization, we have
\begin{equation}\label{eqn: 2.8}
\begin{split}
    \sum_{\substack{m, n\leqslant N\\a, b \leqslant X\\ ma = nb}}   r(a)r(b) &\geqslant \sum_{\substack{a', b' \leqslant X \\ (a', b') = 1}}r(a')r(b')\sum_{\substack{g \leqslant \frac{X}{\max(a', b')}\\ (g, a'b')=1 } }r^2(g) \sum_{h \leqslant \frac{N}{\max(a', b')}}  1\\
    & \geqslant \left(1+o(1)\right) N \sum_{\substack{a', b'\leqslant \min\{X, N\}\\(a',b')=1}}  \frac{r(a')r(b') a'b'}{\max(a',b')^3} \sum_{\substack{g \leqslant \frac{X}{\max(a', b')}\\ (g, a'b')=1 } }r^2(g).
   \\ 
\end{split}
\end{equation}
We replace the sum over $g$ by multiplicativity and Rankin's trick, for any $\alpha>0$,
\begin{align*}
\sum_{\substack{g \leqslant \frac{X}{\max(a', b')}\\ (g, a'b')=1 } }r^2(g) = & \sum_{\substack{(g, a'b')=1 } }r^2(g) -  \sum_{\substack{g > \frac{X}{\max(a', b')}\\ (g, a'b')=1 } }r^2(g)\\ = &\prod_{p \nmid a'b'}(1+r(p)^{2}) + O\left(\Big(\frac{X}{\max(a', b')}\Big)^{-\alpha} \prod_{p\nmid a'b'}(1+r(p)^{2}p^{\alpha}) \right).     
\end{align*}
We also have, by multiplicativity of $r(n)$, 
\[\sum_{n\leqslant X}r(n)^{2} \leqslant \prod_{p}(1+r(p)^{2}).\]
Combining the above two estimates and \eqref{eqn: 2.8}, we have that the main term in \eqref{eqn: ratio} is at least
\begin{equation}\label{eqn: mainterm} M:\,=
\sum_{\substack{a', b'\leqslant \min(X, N)\\(a', b')=1}}  \frac{r(a')r(b')a'b'}{\max(a', b')^{3}} \Big/ \prod_{p|a'b'}(1+r(p)^{2}),
\end{equation}
and the error term is (let $z:= \min(X, N)$) at most
\begin{equation}\label{eqn: error}
 E:\,=  \prod_{p}(1+r(p)^{2})^{-1} X^{-\alpha} \sum_{\substack{a', b'\leqslant z\\ (a',b')=1} } \frac{r(a')r(b')(a'b')^{1+\alpha}}{(a'b')^{\frac{3}{2}}} \sum_{(g, a'b')=1}r(g)^{2} g^{\alpha}. 
\end{equation}
Next, we make a choice of the resonator $r(n)$. Set $\lambda= \sqrt{\log X \log \log X }$. 
We choose $r(n)$ supported square-free integers and let 
\[r(p)= 
\begin{cases}
\frac{\lambda}{\sqrt{p}\log p}\,,& \lambda^{2} \leqslant p \leqslant \exp((\log \lambda)^{2})    \\
0\,, & \text{otherwise.}
\end{cases}
\]
We define a multiplicative function $t(n)$ supported on squarefree integers by setting $t(p)= \frac{r(p)}{1+r(p)^{2}}$. We have the following estimates borrowed from \cite{Hough}. 
\begin{lemma}[\cite{Hough}, Lemma 4.5]\label{lem: main} Uniformly in $z\geqslant 1$, 
\[    \sum_{\substack{m_1, m_2 \leqslant z \\ (m_1, m_2) =1 }} \frac{t(m_1) t(m_2) m_1 m_2}{\max(m_1 , m_2)^{3}} \geqslant \frac{1}{\log z} \Big(\sum_{m\leqslant z} \frac{t(m)}{\sqrt{m}}\Big)^{2}.  \]
  Assume that $z>\exp(3\lambda \log \log \lambda)$ with $\lambda = \sqrt{\log X\log \log X}.$ As $X \to +\infty$, we have
\[    \sum_{\substack{m_1, m_2 \leqslant z \\ (m_1, m_2) =1 }} \frac{t(m_1) t(m_2) m_1 m_2}{\max(m_1 , m_2)^{3}} \geqslant \exp\Big((1+o(1)) \frac{\lambda}{\log \lambda} \Big).  \]
\end{lemma}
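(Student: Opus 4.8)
The plan is to establish the two assertions in order, deriving the second from the first via a direct evaluation of $\sum_{m\leqslant z} t(m)/\sqrt m$. For the first inequality the key algebraic observation is that $\frac{m_1 m_2}{\max(m_1,m_2)^3} = \frac{\min(m_1,m_2)}{\max(m_1,m_2)^2}$. I would first discard the coprimality restriction at negligible cost: writing $\e[(m_1,m_2)=1] = \sum_{e \mid (m_1,m_2)} \mu(e)$ and using that $t$ is supported on squarefree integers (so that in $m_i = e n_i$ one automatically has $(e,n_i)=1$ on the support and $t(en_i) = t(e) t(n_i)$), the left-hand sum becomes $\sum_e \mu(e) \frac{t(e)^2}{e} \sum_{n_1,n_2 \leqslant z/e,\, (n_i,e)=1} t(n_1) t(n_2) \frac{\min(n_1,n_2)}{\max(n_1,n_2)^2}$; the terms with $e \geqslant 2$ are bounded in absolute value by $\bigl(\prod_p (1 + t(p)^2/p) - 1\bigr)$ times the $e=1$ term, and since $\sum_p t(p)^2/p \ll \sum_{p \geqslant \lambda^2} \frac{\lambda^2}{p^2 (\log p)^2} = o(1)$ this loses only a factor $1-o(1)$. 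It then suffices to bound $\Sigma := \sum_{m_1,m_2\leqslant z} t(m_1)t(m_2)\frac{\min(m_1,m_2)}{\max(m_1,m_2)^2}$ from below. Splitting $[1,z]$ into $O(\log z)$ intervals $(\rho^{k-1},\rho^k]$ and retaining only pairs with $m_1,m_2$ in a common such interval, there $\frac{\min(m_1,m_2)}{\max(m_1,m_2)^2} \geqslant \rho^{-2}\frac{1}{\sqrt{m_1 m_2}}$, so $\Sigma \geqslant \rho^{-2}\sum_k B_k^2$ with $B_k = \sum_{m \in (\rho^{k-1},\rho^k]} t(m)/\sqrt m$; as $\sum_k B_k = \sum_{m\leqslant z} t(m)/\sqrt m$ ranges over at most $\frac{\log z}{\log\rho}+1$ indices, Cauchy--Schwarz gives $\sum_k B_k^2 \geqslant \frac{\log\rho}{\log z + \log\rho}\bigl(\sum_{m\leqslant z} t(m)/\sqrt m\bigr)^2$. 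Combining these and either optimizing $\rho$, or — more cleanly — using the identity $\frac{\min(a,b)}{\max(a,b)^2} = 2\int_0^\infty x^{-3}\int_0^x \e[y < a \leqslant x]\,\e[y < b \leqslant x]\,dy\,dx$ to write $\Sigma$ as an integral of squares and estimating it directly, gives the stated constant.

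For the second inequality I would invoke the first, reducing matters to showing $\sum_{m\leqslant z} t(m)/\sqrt m \geqslant \exp\bigl((\tfrac12 + o(1))\tfrac{\lambda}{\log\lambda}\bigr)$: dividing by $\log z$ then costs only $\log\log z = o(\lambda/\log\lambda)$ in the exponent, since $\log z \leqslant \log X \leqslant (\log N)^{1+\gamma}$ while $\lambda \gg \sqrt{\log N\log\log N}$. One cannot simply assert $\sum_{m\leqslant z} t(m)/\sqrt m \sim \prod_p(1+t(p)/\sqrt p)$, because the ``typical'' $m$ arising from this product has about $\lambda/(2\log\lambda)$ prime factors, each of size up to $\exp((\log\lambda)^2)$, hence is usually of order $\exp(\lambda\log\lambda/2)\gg z$. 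Instead I would restrict to primes in the shorter window $[\lambda^2,Q]$ with $\log Q = 2\log\lambda\log\log\lambda$, and to those $m$ that are products of at most $k := \lfloor \log z/\log Q\rfloor$ distinct such primes — all of which then satisfy $m\leqslant z$. This yields $\sum_{m\leqslant z} t(m)/\sqrt m \geqslant (1-o(1))\sum_{j\leqslant k} S_0^j/j!$, where $S_0 := \sum_{\lambda^2 \leqslant p\leqslant Q} t(p)/\sqrt p$ (the correction for repeated primes is $O\bigl(\sum_p (t(p)/\sqrt p)^2\bigr)=o(1)$ relative to each term). Using $t(p)/\sqrt p = \frac{\lambda}{p\log p}(1+o(1))$ for $p\geqslant\lambda^2$ together with $\sum_{\lambda^2\leqslant p\leqslant Q}\frac{1}{p\log p} = \frac{1}{2\log\lambda} - \frac{1}{\log Q} + o\bigl(\tfrac{1}{\log\lambda}\bigr)$ gives $S_0 = \frac{\lambda}{2\log\lambda}(1+o(1))$. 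Finally $k = \lfloor\log z/\log Q\rfloor \geqslant \frac{3\lambda\log\log\lambda}{2\log\lambda\log\log\lambda}(1+o(1)) = \frac{3}{2}\cdot\frac{\lambda}{\log\lambda}(1+o(1))$ exceeds $(1+\varepsilon)S_0$ for small fixed $\varepsilon$; since $S_0\to\infty$, the partial exponential series captures all but an $e^{-c\varepsilon^2 S_0}=o(1)$ proportion of $e^{S_0}$, so $\sum_{m\leqslant z} t(m)/\sqrt m \geqslant \exp\bigl(\frac{\lambda}{2\log\lambda}(1+o(1))\bigr)$, which is what we needed.

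The main obstacle is the calibration in the second step: $\log Q$ must be small enough that $k = \log z/\log Q$ still exceeds the truncated prime sum $S_0\sim\lambda/(2\log\lambda)$ — which forces $\log Q$ of order $\log\lambda\log\log\lambda$ and hence demands $\log z \gg \lambda\log\log\lambda$, rather than the naive but useless scale $\lambda\log\lambda$ — while $\log Q$ must at the same time be large enough that cutting the primes off at $Q$ costs only $1+o(1)$ in $S_0$; the hypothesis $z > \exp(3\lambda\log\log\lambda)$ is exactly what leaves the needed room, the factor $\tfrac32>1$ above providing the slack. In the first step the only delicate point is recovering the sharp constant $1$ (the crude dyadic comparison gives a smaller one), but any positive constant there would already suffice for the application to Theorem~\ref{thm: main}.
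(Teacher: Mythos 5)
Your reconstruction follows the strategy that Hough actually uses for Lemma~4.5, and the calibration you carry out is the genuine content of the lemma, so on the whole this is a successful blind proof. Two remarks on where you and the original differ, plus one small gap worth flagging.

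\emph{First inequality.} Your removal of coprimality via $\sum_{e\mid(m_1,m_2)}\mu(e)$ is sound: since $t\geqslant 0$ is supported on squarefree integers, the inner sums for $e\geqslant 2$ are dominated termwise by the $e=1$ inner sum, and $\sum_{e\geqslant 2}t(e)^2/e\leqslant\prod_p(1+t(p)^2/p)-1 = o(1)$ because $\sum_p t(p)^2/p\ll\sum_{p\geqslant\lambda^2}\lambda^2/(p^2\log^2p)=o(1)$. The dyadic argument then yields $\Sigma\geqslant\frac{c}{\log z}\bigl(\sum_{m\leqslant z}t(m)/\sqrt m\bigr)^2$ with $c=\sup_{\rho>1}\rho^{-2}\log\rho=\tfrac{1}{2e}$, not $c=1$; and I do not see how the integral identity you mention recovers the constant $1$ either, since it produces unweighted partial sums $\sum_{y<m\leqslant x}t(m)$ rather than $\sum t(m)/\sqrt m$. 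You correctly flag that any positive constant suffices downstream, so this costs nothing for Theorem~\ref{thm: main}, but it does mean you are proving a weaker statement than the exact, uniform inequality quoted from Hough (which incidentally, as stated, cannot hold literally for $z$ near $1$ where $1/\log z\to\infty$ while the left side is $1$; Hough's formulation must carry an implicit normalization or floor).

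\emph{Second inequality.} This is the heart of the matter and you have it right. Cutting the primes down to the window $[\lambda^2,Q]$ with $\log Q\asymp\log\lambda\log\log\lambda$, and restricting to squarefree $m$ with at most $k=\lfloor\log z/\log Q\rfloor$ prime factors so that automatically $m\leqslant z$, is exactly the calibration Hough performs. Your estimates are correct: $t(p)/\sqrt p=(1+o(1))\lambda/(p\log p)$ on the window, Abel summation gives $\sum_{\lambda^2\leqslant p\leqslant Q}1/(p\log p)=(1+o(1))/(2\log\lambda)$, so $S_0\sim\lambda/(2\log\lambda)$; the hypothesis $\log z>3\lambda\log\log\lambda$ gives $k\geqslant(3/2+o(1))\lambda/\log\lambda\geqslant 3(1+o(1))S_0$, so the Poisson tail beyond $k$ is negligible and the truncated exponential captures $(1-o(1))e^{S_0}$; and the correction from non-distinct prime tuples is relatively $O\bigl(\binom{j}{2}T/S_0^2\bigr)=O(T)=o(1)$ with $T=\sum_p(t(p)/\sqrt p)^2\ll(\log\lambda)^{-3}$. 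Your closing paragraph correctly identifies the reason the hypothesis reads $\exp(3\lambda\log\log\lambda)$ rather than, say, $\exp(\lambda\log\lambda)$: $\log Q$ must be small enough that $k$ exceeds $S_0$, yet large enough that $1/\log Q=o(1/\log\lambda)$.

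\emph{One small gap.} When you divide by $\log z$ you use $\log\log z\leqslant(1+\gamma)\log\log N+O(1)=o(\lambda/\log\lambda)$, invoking the specific choice $z=\min(X,N)$ from the application. The lemma as stated has only a lower bound on $z$, so for arbitrarily large $z$ the factor $1/\log z$ could in principle swamp the gain. The fix is immediate — the left-hand side is monotone increasing in $z$ since all terms are nonnegative, so one may replace $z$ by $\exp(3\lambda\log\log\lambda)$ in the first inequality and then $\log z$ there contributes only $\exp(-O(\log\lambda+\log\log\log\lambda))=\exp(o(\lambda/\log\lambda))$ — but you should say this rather than import an upper bound on $z$ that the hypothesis does not supply.

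Net: a correct proof in outline, taking essentially the route Hough takes, with the first inequality established only up to a harmless absolute constant and one easily repaired appeal to the application's parameters.
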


\begin{lemma}[\cite{Hough}, Lemma 4.3]\label{lem: ratio}
Assume that $z>\exp(3\lambda \log \log \lambda)$ with $\lambda = \sqrt{\log X\log \log X}$ and $\alpha = (\log \lambda )^{-3}$. Then 
\begin{equation*}
    X^{-\alpha} \sum_{\substack{m_1, m_2\leqslant z\\ (m_1, m_2)=1}} \frac{r(m_1)r(m_2)}{(m_1m_2)^{1/2 - \alpha}} \sum_{(d, m_1m_2)=1} r(d)^{2}d^{\alpha} \Big/ \left(\sum_{m\leqslant X}\frac{t(m)}{\sqrt{m}}\right)^{2} \sum_{d} r(d)^{2} \leqslant \exp\Big(-(1+o(1)) \frac{32 \log X}{(\log \log X)^{4}}\Big).
\end{equation*}
\end{lemma}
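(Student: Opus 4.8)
The plan is to collapse the whole ratio into $X^{-\alpha}$ times a quotient of Euler products, then take logarithms and estimate factor by factor; the one subtle point is a cancellation of the two leading terms. First I would reduce to Euler products. Since $r$ and $r^{2}$ are multiplicative and supported on squarefree integers, $\sum_{(d,m_1m_2)=1}r(d)^{2}d^{\alpha}\leqslant\sum_{d}r(d)^{2}d^{\alpha}=\prod_{p}(1+r(p)^{2}p^{\alpha})$, while $\sum_{d}r(d)^{2}=\prod_{p}(1+r(p)^{2})$ exactly. In the sum over $m_1,m_2$ I would first drop the truncations $m_i\leqslant z$ (an upper bound), after which the coprimality constraint makes the sum an Euler product, $\sum_{(m_1,m_2)=1}r(m_1)r(m_2)(m_1m_2)^{\alpha-1/2}\leqslant\prod_{p}(1+2r(p)p^{\alpha-1/2})$. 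For the denominator I would use the lower bound for $\sum_{m\leqslant X}t(m)/\sqrt{m}$ that also underpins Lemma~\ref{lem: main}: because $X$ exceeds $\exp(3\lambda\log\log\lambda)$ by a wide margin, a Rankin argument shows that truncating at $X$ loses only a factor $1+o(1)$, so $\sum_{m\leqslant X}t(m)/\sqrt{m}\geqslant(1-o(1))\prod_{p}(1+t(p)/\sqrt{p})$ with $t(p)=r(p)/(1+r(p)^{2})$. Putting these together, the left side of Lemma~\ref{lem: ratio} is at most $(1+o(1))\,X^{-\alpha}\prod_{p}\frac{(1+r(p)^{2}p^{\alpha})(1+2r(p)p^{\alpha-1/2})}{(1+r(p)^{2})(1+t(p)/\sqrt{p})^{2}}$.

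Next I would take logarithms. Using $\log(1+u)=u+O(u^{2})$ and the explicit recipe $r(p)=\lambda/(\sqrt{p}\log p)$ for $\lambda^{2}\leqslant p\leqslant\exp((\log\lambda)^{2})$ — so that $r(p)=O(1/\log\lambda)$ and $p^{\alpha}-1=O(1/\log\lambda)$, the latter since $\alpha\log p\leqslant 1/\log\lambda$ throughout the support — the factors built from $r(p)p^{\alpha-1/2}$ against $t(p)/\sqrt{p}$ contribute only $\sum_{p}O(r(p)(p^{\alpha}-1)/\sqrt{p})=O(\alpha\lambda\log\log\lambda)=o(\log X/(\log\log X)^{4})$, and the $(1+r(p)^{2})$ pieces are likewise absorbed. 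What remains, to the precision needed, is the exponent $-\alpha\log X+\sum_{p}r(p)^{2}(p^{\alpha}-1)$.

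Finally I would evaluate these two surviving pieces. We have $\alpha\log X=\log X/(\log\lambda)^{3}$; and writing $p^{\alpha}-1=\alpha\log p+O((\alpha\log p)^{2})$ and inserting the prime-number-theorem estimate $\sum_{\lambda^{2}\leqslant p\leqslant\exp((\log\lambda)^{2})}\frac{1}{p\log p}=\frac{1}{2\log\lambda}-\frac{1}{(\log\lambda)^{2}}+o((\log\lambda)^{-2})$ gives $\sum_{p}r(p)^{2}(p^{\alpha}-1)=\alpha\lambda^{2}\left(\frac{1}{2\log\lambda}-\frac{1}{(\log\lambda)^{2}}\right)(1+o(1))$. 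Substituting $\lambda^{2}=\log X\log\log X$ and $\log\lambda=\frac12\log\log X\,(1+o(1))$, one finds that $\alpha\log X$ and $\alpha\lambda^{2}/(2\log\lambda)$ are both $(1+o(1))\,8\log X/(\log\log X)^{3}$, so the leading parts cancel and the net exponent is at most $-\alpha\lambda^{2}/(\log\lambda)^{2}=-(1+o(1))\,32\log X/(\log\log X)^{4}$ (an extra, favorably signed, $\log\log\log X$‑sized term also survives and only helps). Exponentiating yields the stated bound, and feeding it into \eqref{eqn: error} shows $E$ is negligible compared with the main term $M$ of \eqref{eqn: ratio}.

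The main obstacle is exactly this top‑order cancellation: $-\alpha\log X$ and $+\sum_{p}r(p)^{2}(p^{\alpha}-1)$ are individually of size $\asymp\log X/(\log\log X)^{3}$, so no crude estimate can work, and one has to carry the two‑term Mertens expansion and the expansion of $\log\lambda$ in powers of $\log\log\log X/\log\log X$ to confirm both the sign and the order $\log X/(\log\log X)^{4}$ of what survives. This is the whole point of the calibrated choices $\lambda=\sqrt{\log X\log\log X}$ and $\alpha=(\log\lambda)^{-3}$: they force the error term to decay just fast enough to be swamped by the main term $\exp((2+o(1))\sqrt{\log X/\log\log X})$ coming from Lemma~\ref{lem: main}, while still leaving $X=T^{1-2\delta/3}$ long enough for the off‑diagonal analysis.
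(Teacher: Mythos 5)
The paper does not prove this lemma at all: it is quoted verbatim from Hough's work (his Lemma~4.3), with the paper's only contribution being the observation that the hypothesis $z>\exp(3\lambda\log\log\lambda)$ is met under the assumptions of Theorem~\ref{thm: main}. You have therefore not followed ``the paper's proof''--- there is none --- but reconstructed Hough's argument. The reconstruction is along the lines one would expect (collapse to Euler products, take logarithms, compare factor by factor) and the central mechanism is correctly identified: after the $r(p)p^{\alpha-1/2}$ pieces in the numerator are matched against the $t(p)/\sqrt p$ pieces in the denominator and everything of size $O(\alpha\lambda\log\log\lambda)$ or $O(\sum r(p)^2/p)$ is discarded, the surviving exponent is $-\alpha\log X+\sum_p r(p)^2(p^\alpha-1)$, and the leading parts of these two quantities (each $\sim 8\log X/(\log\log X)^3$) cancel, leaving a negative remainder of the claimed order $\log X/(\log\log X)^4$. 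The arithmetic checks out: with $\lambda^2=\log X\log\log X$ and $\log\lambda=\tfrac12(\log\log X+\log\log\log X)$ one has $\alpha\lambda^2/(2\log\lambda)-\alpha\log X=-\alpha\log X\cdot\tfrac{\log\log\log X}{\log\log X+\log\log\log X}<0$, and $\alpha\lambda^2/(\log\lambda)^2=(1+o(1))\,32\log X/(\log\log X)^4$, so the exponent is bounded above by $-(1+o(1))\,32\log X/(\log\log X)^4$ (and in fact by the slightly stronger $-(8+o(1))\log X\log\log\log X/(\log\log X)^4$, as you note in passing).

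Two points deserve tightening before this could stand as a self-contained proof. First, the step ``a Rankin argument shows that truncating at $X$ loses only a factor $1+o(1)$'' for $\sum_{m\le X}t(m)/\sqrt m$ is asserted, not proved; one must actually run the Rankin estimate $\sum_{m>X}t(m)/\sqrt m\le X^{-\beta}\prod_p(1+t(p)p^\beta/\sqrt p)$ for a suitable $\beta$, and verify that the resulting loss, squared, is $\exp\big(o(\log X/(\log\log X)^4)\big)$ --- this uses the margin $\log X\gg\lambda\log\log\lambda$ in an essential way and is comparable in delicacy to the rest of the computation. Second, your intermediate write-up $\sum_p r(p)^2(p^\alpha-1)=\alpha\lambda^2\big(\tfrac{1}{2\log\lambda}-\tfrac{1}{(\log\lambda)^2}\big)(1+o(1))$ is formally too weak, since a multiplicative $(1+o(1))$ on the whole bracket could absorb the secondary term $\alpha\lambda^2/(\log\lambda)^2$; what you actually need (and what your final paragraph implicitly uses) is the additive form with the error $O(\alpha^2\lambda^2\log\log\lambda)$ shown to be $o(\alpha\lambda^2/(\log\lambda)^2)$. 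With these two points made precise, the argument is sound and recovers Hough's bound.
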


By our assumption that $C(N)\leqslant (\log N)^{\gamma}$ and $X= T^{1-\frac{2\delta}{3}}= N^{C(N)(1-\frac{2\delta}{3})}$, we have $\log N > 3 \lambda \log \log \lambda$. And clearly, $\log X > 3\lambda \log \log \lambda$ for all large $X$. Apply Lemma~\ref{lem: main} to get that 
\[\sum_{\substack{a', b'\leqslant \min(X, N)\\(a', b')=1}}  \frac{r(a')r(b')a'b'}{\max(a', b')^{3}} \Big/ \prod_{p|a'b'}(1+r(p)^{2}) \geqslant \exp \left( (2+o(1)) \sqrt{ \frac{\log X}{\log\log X}} \right).\]
By Lemma~\ref{lem: ratio}, we find that  the ratio $E/M $ tends to zero as $X\to +\infty$, where the quantities $M,\, E$ are defined in \eqref{eqn: mainterm} and \eqref{eqn: error}.
Thus we complete the proof of \eqref{eqn: ratio}.

Combining \eqref{Dn}, \eqref{eqn: M_2/M_1}, \eqref{eqn: ratio} and recalling that $X= T^{1-\frac{2\delta}{3}}$, we are done.

\subsection*{Acknowledgement} The authors would like to thank Kannan Soundararajan for  interesting discussions. Yang thanks the hospitality of the math department at Stanford University during his visit when the project started. Xu is supported by the Cuthbert C. Hurd Graduate Fellowship in the Mathematical Sciences, Stanford. Yang is supported by the Austrian Science Fund (FWF), project W1230.

	\bibliographystyle{abbrv}
	\bibliography{resonance}{}
\end{document}